\newtheorem{theorem}{Theorem}
\newtheorem{corollary}[theorem]{Corollary}
\newtheorem{lemma}[theorem]{Lemma}
\newtheorem{proposition}[theorem]{Proposition}
\newtheorem{remark}[theorem]{Remark}
\numberwithin{equation}{section} \numberwithin{theorem}{section}
\begin{document}

\title{Pattern Recognition on Oriented Matroids: Symmetric Cycles in the Hypercube Graphs. II}

\author{Andrey O. Matveev}
\email{andrey.o.matveev@gmail.com}

\begin{abstract}
We present explicit descriptions of the decompositions of vertices of a hypercube graph with respect to its distinguished symmetric cycle.
\end{abstract}

\maketitle

\pagestyle{myheadings}

\markboth{PATTERN RECOGNITION ON ORIENTED MATROIDS}{A.O.~MATVEEV}

\thispagestyle{empty}


\section{Introduction}

Let $\boldsymbol{H}(t,2)$ be the {\em hypercube graph\/} with its vertex set $\{1,-1\}^t$ composed of {\em row vectors\/} $T:=(T(1),\ldots,T(t))$
of the real Euclidean space $\mathbb{R}^t$ with $t\geq 3$. Vertices~$T'$ and~$T''$ are adjacent in $\boldsymbol{H}(t,2)$ if and only if there is a unique element~$e\in E_t:=[t]:=[1,t]:=\{1,\ldots,t\}$ such that $T'(e)=-T''(e)$.

The graph $\boldsymbol{H}(t,2)$ can be regarded as the {\em tope graph\/} of the {\em oriented matroid\/} $\mathcal{H}:=(E_t,\{1,-1\}^t)$ on the {\em ground set\/} $E_t$, with the set of {\em topes\/}~$\{1,-1\}^t$; this oriented matroid is realizable as the {\em arrangement\/} of {\em coordinate hyperplanes\/} in $\mathbb{R}^t$, see, e.g.,~\cite[Example~4.1.4]{BLSWZ}.

We denote by $\mathrm{T}^{(+)}:=(1,\ldots,1)$ the {\em positive tope\/} of $\mathcal{H}$; the {\em negative tope\/} is the tope $\mathrm{T}^{(-)}:=-\mathrm{T}^{(+)}$. If $T\in\{1,-1\}^t$ and $A\subseteq E_t$, then the tope~${}_{-A}T$ by definition has the components
\begin{equation*}
({}_{-A}T)(e):=\begin{cases}
\phantom{-}1\; , & \text{if $e\in A$ and $T(e)=-1$}\; ,\\
-1\; , & \text{if $e\in A$ and $T(e)=\phantom{-}1$}\; ,\\
T(e)\; , & \text{otherwise}\; .
\end{cases}
\end{equation*}
If $\{a\}$ is a one-element subset of $E_t$, then we write ${}_{-a}T$ instead of ${}_{-\{a\}}T$.

In this note, we describe explicitly the {\em decompositions\/} of topes~\mbox{$T\in\{1,-1\}^t$} with respect to one distinguished {\em symmetric $2t$-cycle\/} $\boldsymbol{R}:=(R^0,R^1,\ldots,R^{2t-1},$ $R^0)$
in $\boldsymbol{H}(t,2)$, which is defined as follows:
\begin{equation}
\label{eq:9}
\begin{split}
R^0:\!&=\mathrm{T}^{(+)}\; ,\\
R^s:\!&={}_{-[s]}R^0\; ,\ \ \ 1\leq s\leq t-1\; ,
\end{split}
\end{equation}
and
\begin{equation}
\label{eq:10}
R^{k+t}:=-R^k\; ,\ \ \ 0\leq k\leq t-1\; .
\end{equation}

Consider the nonsingular matrix
\begin{equation}
\label{eq:26}
\mathbf{M}:=\mathbf{M}(\boldsymbol{R}):=\left(\begin{smallmatrix}
R^0\\ R^1\\ \vdots\\ R^{t-1}
\end{smallmatrix}
\right)\in\mathbb{R}^{t\times t}
\end{equation}
whose rows are the topes given in~(\ref{eq:9}). The $i$th row $(\mathbf{M}^{-1})_i$, $1\leq i\leq t$, of the inverse matrix $\mathbf{M}^{-1}$ of $\mathbf{M}$ is
\begin{equation*}
(\mathbf{M}^{-1})_i=\begin{cases}
\frac{1}{2}\cdot(\,\boldsymbol{\sigma}(i)-\boldsymbol{\sigma}(i+1)\,)\; , & \text{if $i\neq t$}\; ,\\
\frac{1}{2}\cdot(\,\boldsymbol{\sigma}(1)+\boldsymbol{\sigma}(t)\,)\; , & \text{if $i=t$}\; ,
\end{cases}
\end{equation*}
where
$\boldsymbol{\sigma}(s):=(0,\ldots,\underset{\overset{\uparrow}{s}}{1},\ldots,0)$
are vectors of the standard basis of $\mathbb{R}^t$.

Recall that for any tope $T\in\{1,-1\}^t$ there exists a unique {\em inclusion-minimal\/} subset~$\boldsymbol{Q}(T,\boldsymbol{R})$ of the vertex set of the cycle~$\boldsymbol{R}$ such that
\begin{equation*}
T=\sum_{Q\in\boldsymbol{Q}(T,\boldsymbol{R})}Q\; ;
\end{equation*}
see~\cite[Sect.~11.1]{AM-PROM-I}.

If we define the row vector $\boldsymbol{x}:=\boldsymbol{x}(T):=\boldsymbol{x}(T,\boldsymbol{R})\in\{-1,0,1\}^t$ by
\begin{equation*}
\boldsymbol{x}
:=T\mathbf{M}^{-1}\; ,
\end{equation*}
then
\begin{equation*}
\boldsymbol{Q}(T,\boldsymbol{R})=\{x_i\cdot R^{i-1}\colon x_i\neq 0\}
\end{equation*}
and
\begin{equation*}
|\boldsymbol{Q}(T,\boldsymbol{R})|=|\{i\in E_t\colon x_i\neq 0\}|=\|\boldsymbol{x}(T)\|^2\; ,
\end{equation*}
where $\|\boldsymbol{x}\|^2:=\langle \boldsymbol{x},\boldsymbol{x}\rangle$, and $\langle\cdot,\cdot\rangle$ is the standard scalar product on $\mathbb{R}^t$.
We have $\|\boldsymbol{x}(T)\cdot\mathbf{M}\|^2=\|T\|^2=t$, and $\sum_{e\in E_t}x_e(T)=\langle\boldsymbol{x}(T),\mathrm{T}^{(+)}\rangle=T(t)$; we will see in Sect.~\ref{chap:11:1} that
if $x_t(T)\neq 0$, then $T(t)=x_t(T)$.

In Proposition~\ref{prop:3} of this note, we describe explicitly the vectors $\boldsymbol{x}(T,\boldsymbol{R})$ associated with
vertices $T$ of the hypercube graph $\boldsymbol{H}(t,2)$ and with its distinguished symmetric cycle $\boldsymbol{R}$ defined by~(\ref{eq:9})(\ref{eq:10}).

\section{Separation sets, the negative parts, and the
decompositions of vertices of the hypercube graph}
\label{chap:11:1}

If $T',T''\in\{1,-1\}^t$ are two vertices of the hypercube graph~$\boldsymbol{H}(t,2)$, then
\begin{equation}
\label{eq:19}
T''=T'-2\sum_{s\in\mathbf{S}(T',T'')}T'(s)\boldsymbol{\sigma}(s)\; ,
\end{equation}
where $\mathbf{S}(T',T''):=\{e\in E_t\colon T'(e)\neq T''(e)\}$ is the {\em separation set\/} of the topes~$T'$ and~$T''$. For the topes in the ordered collection~(\ref{eq:9}), we have \mbox{$\boldsymbol{x}(R^{s-1})=\boldsymbol{\sigma}(s)$}, $s\in E_t$.

Relation~(\ref{eq:19}) implies that
\begin{equation}
\label{eq:22}
\boldsymbol{x}(T'')=\boldsymbol{x}(T')-2\Bigl(\sum_{s\in\mathbf{S}(T',T'')}T'(s)\boldsymbol{\sigma}(s)\Bigr)\cdot\mathbf{M}^{-1}\; .
\end{equation}
In particular, for any
vertex $T\in\{1,-1\}^t$ of $\boldsymbol{H}(t,2)$, we have
\begin{equation}
\label{eq:15}
\begin{split}
\boldsymbol{x}(T)&=\boldsymbol{x}(\mathrm{T}^{(+)})-2\Bigl(\sum_{s\in T^-}\boldsymbol{\sigma}(s)\Bigr)\cdot\mathbf{M}^{-1}\\
&=\boldsymbol{\sigma}(1)-2\Bigl(\sum_{s\in T^-}\boldsymbol{\sigma}(s)\Bigr)\cdot\mathbf{M}^{-1}\; ,
\end{split}
\end{equation}
where $T^-:=\{e\in E_t\colon T(e)=-1\}$ is the {\em negative part\/} of the tope~$T$.

Since
\begin{equation*}
|T^-|=\frac{1}{2}\bigl(t-\langle T,\mathrm{T}^{(+)}\rangle\bigr)=\frac{1}{2}\bigl(t-\sum_{e\in E_t}T(e)\bigr)\; ,
\end{equation*}
we have
\begin{equation*}
|T^-|=j
\end{equation*}
for some integer $j$ if and only if
\begin{equation*}
\begin{split}
\langle T,\mathrm{T}^{(+)}\rangle
&=\boldsymbol{x}(T)\cdot\mathbf{M}\cdot(\mathrm{T}^{(+)})^{\top}\\&=\boldsymbol{x}(T)\cdot(t,t-2,t-4,\ldots,-(t-2))^{\top}=t-2j\; .
\end{split}
\end{equation*}

Note also that for any two vertices $T'$ and $T''$ of $\boldsymbol{H}(t,2)$ we have
\begin{align*}
|(T')^- \cap (T'')^-|&=\tfrac{1}{4}\langle T'-\mathrm{T}^{(+)},T''-\mathrm{T}^{(+)}\rangle\\&=
\tfrac{1}{4}\bigl(t+\langle T',T''\rangle-\langle T'+T'',\mathrm{T}^{(+)}\rangle\bigr)\\
\intertext{and}
|(T')^- \cup (T'')^-|&=
\tfrac{1}{4}\bigl(3t-\langle T',T''\rangle-\langle T'+T'',\mathrm{T}^{(+)}\rangle\bigr)\; .
\end{align*}

\begin{remark}
For the symmetric cycle $\boldsymbol{R}$ in the the hypercube graph~$\boldsymbol{H}(t,2)$, defined by~{\rm(\ref{eq:9})(\ref{eq:10})}, let $\boldsymbol{x},\boldsymbol{x}',\boldsymbol{x}''\in\{-1,0,1\}^t$ be row vectors such that the row vectors $T:=\boldsymbol{x}\cdot\mathbf{M}(\boldsymbol{R})$, $T':=\boldsymbol{x}'\cdot\mathbf{M}(\boldsymbol{R})$ and $T'':=\boldsymbol{x}''\cdot\mathbf{M}(\boldsymbol{R})$ are vertices of $\boldsymbol{H}(t,2)$.
\begin{itemize}
\item[\rm(i)]
We have
\begin{equation*}
\begin{split}
|T^-|&=\frac{1}{2}\bigl(t-\sum_{i\in E_t} x_i\cdot(t-2(i-1))\bigr)=\frac{t}{2}\bigl(1-\sum_{e\in E_t}x_e\bigr)+\sum_{i\in[2,t]}x_i\cdot(i-1)\\
&=\begin{cases}
t+\sum_{i\in[2,t]}x_i\cdot(i-1)\; , & \text{if $\langle\boldsymbol{x},\mathrm{T}^{(+)}\rangle=-1$}\; ,\\
\phantom{t+}\;\sum_{i\in[2,t]}x_i\cdot(i-1)\; , & \text{if $\langle\boldsymbol{x},\mathrm{T}^{(+)}\rangle=\phantom{-}1$}
\end{cases}\\
&=\begin{cases}
t+1+\sum_{i\in E_t}x_i\cdot i\; , & \text{if $\langle\boldsymbol{x},\mathrm{T}^{(+)}\rangle=-1$}\; ,\\
\phantom{t}-1+\sum_{i\in E_t}x_i\cdot i\; , & \text{if $\langle\boldsymbol{x},\mathrm{T}^{(+)}\rangle=\phantom{-}1$}\; .
\end{cases}
\end{split}
\end{equation*}

\item[\rm(ii)]
We have
\begin{multline*}
|(T')^- \cap (T'')^-|=
\tfrac{1}{4}\bigl(t+\langle \boldsymbol{x}'\cdot\mathbf{M},\boldsymbol{x}''\cdot\mathbf{M}\rangle-\langle (\boldsymbol{x}'+\boldsymbol{x}'')\cdot\mathbf{M},\mathrm{T}^{(+)}\rangle\bigr)\\
=
\begin{cases}
\phantom{-}\frac{3t}{4}+1+\frac{1}{4}\langle \boldsymbol{x}'\cdot\mathbf{M},\boldsymbol{x}''\cdot\mathbf{M}\rangle+\frac{1}{2}
\sum_{i\in E_t}(x_i'+x_i'')\cdot i\; , & \quad\\
\ \ \ \ \ \ \ \ \ \ \ \ \ \ \ \ \ \ \ \ \ \ \ \ \ \ \ \ \ \ \ \ \ \text{if $\langle\boldsymbol{x}'',\mathrm{T}^{(+)}\rangle=\langle\boldsymbol{x}',\mathrm{T}^{(+)}\rangle=-1$}\; ,\\
\phantom{-}\frac{t}{4}+\frac{1}{4}\langle \boldsymbol{x}'\cdot\mathbf{M},\boldsymbol{x}''\cdot\mathbf{M}\rangle+\frac{1}{2}
\sum_{i\in E_t}(x_i'+x_i'')\cdot i\; , & \quad\\
\ \ \ \ \ \ \ \ \ \ \ \ \ \ \ \ \ \ \ \ \ \ \ \ \ \ \ \ \ \ \ \ \ \text{if $\langle\boldsymbol{x}'',\mathrm{T}^{(+)}\rangle=-\langle\boldsymbol{x}',\mathrm{T}^{(+)}\rangle$}\; ,\\
-\frac{t}{4}-1+\frac{1}{4}\langle \boldsymbol{x}'\cdot\mathbf{M},\boldsymbol{x}''\cdot\mathbf{M}\rangle+\frac{1}{2}
\sum_{i\in E_t}(x_i'+x_i'')\cdot i\; , & \quad\\
\ \ \ \ \ \ \ \ \ \ \ \ \ \ \ \ \ \ \ \ \ \ \ \ \ \ \ \ \ \ \ \ \ \text{if $\langle\boldsymbol{x}'',\mathrm{T}^{(+)}\rangle=\langle\boldsymbol{x}',\mathrm{T}^{(+)}\rangle=1$}
\end{cases}
\end{multline*}
and
\begin{multline*}
|(T')^- \cup (T'')^-|=
\tfrac{1}{4}\bigl(3t-\langle \boldsymbol{x}'\cdot\mathbf{M},\boldsymbol{x}''\cdot\mathbf{M}\rangle-\langle (\boldsymbol{x}'+\boldsymbol{x}'')\cdot\mathbf{M},\mathrm{T}^{(+)}\rangle\bigr)\\
=\begin{cases}
\phantom{-}\frac{5t}{4}+1-\frac{1}{4}\langle \boldsymbol{x}'\cdot\mathbf{M},\boldsymbol{x}''\cdot\mathbf{M}\rangle+\frac{1}{2}
\sum_{i\in E_t}(x_i'+x_i'')\cdot i\; , & \quad\\
\ \ \ \ \ \ \ \ \ \ \ \ \ \ \ \ \ \ \ \ \ \ \ \ \ \ \ \ \ \ \ \ \ \text{if $\langle\boldsymbol{x}'',\mathrm{T}^{(+)}\rangle=\langle\boldsymbol{x}',\mathrm{T}^{(+)}\rangle=-1$}\; ,\\
\phantom{-}\frac{3t}{4}-\frac{1}{4}\langle \boldsymbol{x}'\cdot\mathbf{M},\boldsymbol{x}''\cdot\mathbf{M}\rangle+\frac{1}{2}
\sum_{i\in E_t}(x_i'+x_i'')\cdot i\; , & \quad\\
\ \ \ \ \ \ \ \ \ \ \ \ \ \ \ \ \ \ \ \ \ \ \ \ \ \ \ \ \ \ \ \ \ \text{if $\langle\boldsymbol{x}'',\mathrm{T}^{(+)}\rangle=-\langle\boldsymbol{x}',\mathrm{T}^{(+)}\rangle$}\; ,\\
\phantom{-}\frac{t}{4}-1-\frac{1}{4}\langle \boldsymbol{x}'\cdot\mathbf{M},\boldsymbol{x}''\cdot\mathbf{M}\rangle+\frac{1}{2}
\sum_{i\in E_t}(x_i'+x_i'')\cdot i\; , & \quad\\
\ \ \ \ \ \ \ \ \ \ \ \ \ \ \ \ \ \ \ \ \ \ \ \ \ \ \ \ \ \ \ \ \ \text{if $\langle\boldsymbol{x}'',\mathrm{T}^{(+)}\rangle=\langle\boldsymbol{x}',\mathrm{T}^{(+)}\rangle=1$}\; .
\end{cases}
\end{multline*}
\end{itemize}
\end{remark}

Since
\begin{equation*}
\{1,-1\}^t=\Bigl\{\mathrm{T}^{(+)}-2\sum_{s\in A}\boldsymbol{\sigma}(s)\colon A\subseteq E_t\Bigr\}\; ,
\end{equation*}
we have
\begin{equation*}
\bigl\{\boldsymbol{x}(T)\colon T\in\{1,-1\}^t\bigr\}=
\Bigl\{\boldsymbol{\sigma}(1)-2\Bigl(\sum_{s\in A}\boldsymbol{\sigma}(s)\Bigr)\cdot\mathbf{M}^{-1}\colon A\subseteq E_t\Bigr\}\; .
\end{equation*}

If $s\in E_t$, then we define a row vector $\boldsymbol{y}(s):=\boldsymbol{y}(s;t)\in\{-1,0,1\}^t$ by
\begin{equation*}
\boldsymbol{y}(s):=\boldsymbol{x}({}_{-s}\mathrm{T}^{(+)})=\boldsymbol{\sigma}(1)-2\boldsymbol{\sigma}(s)\cdot\mathbf{M}^{-1}\; ,
\end{equation*}
that is,
\begin{equation*}
\boldsymbol{y}(s):=
\begin{cases}
\phantom{-}\boldsymbol{\sigma}(2)\; , & \text{if $s=1$}\; ,\\
\phantom{-}\boldsymbol{\sigma}(1) - \boldsymbol{\sigma}(s) + \boldsymbol{\sigma}(s+1)\; , & \text{if $1<s<t$}\; ,\\
-\boldsymbol{\sigma}(t)\; , & \text{if $s=t$}\; .
\end{cases}
\end{equation*}

\begin{remark}
\label{prop:1}
If~$A\subseteq E_t$, then
\begin{equation*}
\begin{split}
\boldsymbol{x}({}_{-A}\mathrm{T}^{(+)})&=-\boldsymbol{x}({}_{-(E_t-A)}\mathrm{T}^{(+)})\\&=(1-|A|)\cdot\boldsymbol{\sigma}(1)+\sum_{s\in A}\boldsymbol{y}(s)
\end{split}
\end{equation*}
and, as a consequence, we have
\begin{equation*}
\bigl\{\boldsymbol{x}(T)\colon T\in\{1,-1\}^t\bigr\}=
\Bigl\{(1-|A|)\cdot\boldsymbol{\sigma}(1)+\sum_{s\in A}\boldsymbol{y}(s)\colon A\subseteq E_t\Bigr\}\; .
\end{equation*}
\end{remark}

For subsets $A\subseteq E_t$, relations~(\ref{eq:15}) imply the following:
\begin{equation*}
\{1,t\}\cap A=\{1\}\ \ \ \Longrightarrow\ \ \ \boldsymbol{x}({}_{-A}\mathrm{T}^{(+)})=
\boldsymbol{\sigma}(1)-(\;\underbrace{\boldsymbol{\sigma}(1)-\boldsymbol{\sigma}(2)}_{2\cdot(\mathbf{M}^{-1})_1}\;)\;
-\sum_{i\in A-\{1\}}(\;\underbrace{\boldsymbol{\sigma}(i)-\boldsymbol{\sigma}(i+1)}_{2\cdot(\mathbf{M}^{-1})_i}\;)\; ;
\end{equation*}

\begin{multline*}
\{1,t\}\cap A=\{1,t\}\ \ \ \Longrightarrow\ \ \ \\ \boldsymbol{x}({}_{-A}\mathrm{T}^{(+)})=
\boldsymbol{\sigma}(1)-(\;\underbrace{\boldsymbol{\sigma}(1)-\boldsymbol{\sigma}(2)}_{2\cdot(\mathbf{M}^{-1})_1}\;)-
(\;\underbrace{\boldsymbol{\sigma}(1)+\boldsymbol{\sigma}(t)}_{2\cdot(\mathbf{M}^{-1})_t}\;)\;
-\sum_{i\in A-\{1,t\}}(\;\underbrace{\boldsymbol{\sigma}(i)-\boldsymbol{\sigma}(i+1)}_{2\cdot(\mathbf{M}^{-1})_i}\;)\; ;
\end{multline*}

\begin{equation*}
|\{1,t\}\cap A|=0\ \ \ \Longrightarrow\ \ \ \boldsymbol{x}({}_{-A}\mathrm{T}^{(+)})=\boldsymbol{\sigma}(1)-\sum_{i\in A}(\;\underbrace{\boldsymbol{\sigma}(i)-\boldsymbol{\sigma}(i+1)}_{2\cdot(\mathbf{M}^{-1})_i}\;)\; ;
\end{equation*}

\begin{equation*}
\{1,t\}\cap A=\{t\}\ \ \ \Longrightarrow\ \ \ \boldsymbol{x}({}_{-A}\mathrm{T}^{(+)})=
\boldsymbol{\sigma}(1)-(\;\underbrace{\boldsymbol{\sigma}(1)+\boldsymbol{\sigma}(t)}_{2\cdot(\mathbf{M}^{-1})_t}\;)\;
-\sum_{i\in A-\{t\}}(\;\underbrace{\boldsymbol{\sigma}(i)-\boldsymbol{\sigma}(i+1)}_{2\cdot(\mathbf{M}^{-1})_i}\;)\; .
\end{equation*}
We arrive at the following conclusion:
\begin{remark}
\label{prop:2}
If $A\subseteq E_t$, then
\begin{multline*}
\boldsymbol{x}({}_{-A}\mathrm{T}^{(+)})=-\boldsymbol{x}({}_{-(E_t-A)}\mathrm{T}^{(+)})\\=
\begin{cases}
\phantom{-}\boldsymbol{\sigma}(2)-\sum_{i\in A-\{1\}}(\;\boldsymbol{\sigma}(i)-\boldsymbol{\sigma}(i+1)\;)\; , & \text{if $\{1,t\}\cap A=\{1\}$}\; ,\\
-\boldsymbol{\sigma}(1)+\boldsymbol{\sigma}(2)-\boldsymbol{\sigma}(t)-\sum_{i\in A-\{1,t\}}(\;\boldsymbol{\sigma}(i)-\boldsymbol{\sigma}(i+1)\;)\; , & \text{if $\{1,t\}\cap A=\{1,t\}$}\; ,\\
\phantom{-}\boldsymbol{\sigma}(1)-\sum_{i\in A}(\;\boldsymbol{\sigma}(i)-\boldsymbol{\sigma}(i+1)\;)\; , & \text{if $|\{1,t\}\cap A|=0$}\; ,\\
-\boldsymbol{\sigma}(t)-\sum_{i\in A-\{t\}}(\;\boldsymbol{\sigma}(i)-\boldsymbol{\sigma}(i+1)\;)\; , & \text{if $\{1,t\}\cap A=\{t\}$}\; .
\end{cases}
\end{multline*}
\end{remark}

Since the sums appearing in Remark~\ref{prop:2} depend only on the endpoints of intervals that compose the sets $A$, we obtain the following explicit descriptions of the decompositions of vertices of the hypercube graph~$\boldsymbol{H}(t,2)$:

\begin{proposition}
\label{prop:3}
Let $\boldsymbol{R}$ be the symmetric cycle in the hypercube graph~$\boldsymbol{H}(t,2)$, defined by~{\rm(\ref{eq:9})(\ref{eq:10})}.

Let $A$ be a nonempty subset of $E_t$, and let
\begin{equation}
\label{eq:16}
A=[i_1,j_1]\;\dot\cup\;[i_2,j_2]\;\dot\cup\;\cdots\;\dot\cup\;[i_{\varrho},j_{\varrho}]
\end{equation}
be its partition into intervals such that
\begin{equation}
\label{eq:17}
j_1+2\leq i_2,\ \ j_2+2\leq i_3,\ \ \ldots,\ \
j_{\varrho-1}+2\leq i_{\varrho}\; ,
\end{equation}
for some $\varrho:=\varrho(A)$.

\begin{itemize}
\item[\rm(i)]
If $\{1,t\}\cap A=\{1\}$, then
\begin{align*}
|\boldsymbol{Q}({}_{-A}\mathrm{T}^{(+)},\boldsymbol{R})|&=
2\varrho-1\; ,\\
\boldsymbol{x}({}_{-A}\mathrm{T}^{(+)},\boldsymbol{R})&=
\sum_{1\leq k\leq\varrho}\boldsymbol{\sigma}(j_k+1)-\sum_{2\leq \ell\leq\varrho}\boldsymbol{\sigma}(i_{\ell})\; .
\end{align*}

\item[\rm(ii)]
If $\{1,t\}\cap A=\{1,t\}$, then
\begin{align*}
|\boldsymbol{Q}({}_{-A}\mathrm{T}^{(+)},\boldsymbol{R})|&=2\varrho-1\; ,\\
\boldsymbol{x}({}_{-A}\mathrm{T}^{(+)},\boldsymbol{R})&=-\boldsymbol{\sigma}(1)+\sum_{1\leq k\leq\varrho-1}\boldsymbol{\sigma}(j_k+1)-
\sum_{2\leq\ell \leq\varrho}\boldsymbol{\sigma}(i_{\ell})\; .
\end{align*}

\item[\rm(iii)]
If $|\{1,t\}\cap A|=0$, then
\begin{align*}
|\boldsymbol{Q}({}_{-A}\mathrm{T}^{(+)},\boldsymbol{R})|&=2\varrho\;+1\; ,\\
\boldsymbol{x}({}_{-A}\mathrm{T}^{(+)},\boldsymbol{R})&=\boldsymbol{\sigma}(1)\;+\sum_{1\leq k\leq\varrho}\boldsymbol{\sigma}(j_k+1)\;-
\sum_{1\leq\ell \leq\varrho}\boldsymbol{\sigma}(i_{\ell})\; .
\end{align*}

\item[\rm(iv)]
If $\{1,t\}\cap A=\{t\}$, then
\begin{align*}
|\boldsymbol{Q}({}_{-A}\mathrm{T}^{(+)},\boldsymbol{R})|&=2\varrho-1\; ,\\
\boldsymbol{x}({}_{-A}\mathrm{T}^{(+)},\boldsymbol{R})&=
\sum_{1\leq k\leq\varrho-1}\boldsymbol{\sigma}(j_k+1)-\sum_{1\leq \ell\leq\varrho}\boldsymbol{\sigma}(i_{\ell})\; .
\end{align*}
\end{itemize}
\end{proposition}

In particular, we have
\begin{equation*}
1\leq j<t\ \ \ \Longrightarrow\ \ \
\boldsymbol{x}({}_{-[j]}\mathrm{T}^{(+)})=\boldsymbol{\sigma}(j+1)\; ;
\end{equation*}
\begin{equation*}
\boldsymbol{x}(\mathrm{T}^{(-)})=-\boldsymbol{\sigma}(1)\; ;
\end{equation*}
\begin{equation*}
1<i<j<t\ \ \ \Longrightarrow\ \ \
\boldsymbol{x}({}_{-[i,j]}\mathrm{T}^{(+)})=\boldsymbol{\sigma}(1)-\boldsymbol{\sigma}(i)+\boldsymbol{\sigma}(j+1)\; ;
\end{equation*}
\begin{equation*}
1<i\leq t\ \ \ \Longrightarrow\ \ \
\boldsymbol{x}({}_{-[i,t]}\mathrm{T}^{(+)})=-\boldsymbol{\sigma}(i)\; .
\end{equation*}

Note that for any vertex $T$ of $\boldsymbol{H}(t,2)$ and for elements $e\in E_t$, we have
\begin{equation*}
x_e(T)\neq 0\ \ \ \Longrightarrow\ \ \ x_e(T)=T(e)\; .
\end{equation*}

Let $\mathtt{c}(m;n)$ denote the number of {\em compositions\/} of a positive integer $n$ with $m$ positive parts.

\begin{remark}
\begin{itemize}
\item[\rm(i)]
It follows directly from the basic enumerative result on {\em compositions of integers} {\rm(}see, e.g.,~\cite[p.~17]{Bona}, \cite[Theorem~1.3]{HM}, \cite[p.~18]{St}{\rm)} that
there are precisely $2\mathtt{c}(2\varrho;t)=2\tbinom{t-1}{2\varrho-1}$ subsets $A\subseteq E_t$, with their partitions {\rm(\ref{eq:16})}{\rm(\ref{eq:17})} into intervals, such that \mbox{$|\{1,t\}\cap A|=1$.}
\item[\rm(ii)] There are $\mathtt{c}(2\varrho-1;t)=\tbinom{t-1}{2(\varrho-1)}$ subsets $A\subseteq E_t$, with their partitions~{\rm(\ref{eq:16})}{\rm(\ref{eq:17})}, such that $|\{1,t\}\cap A|=2$.
\item[\rm(iii)] There are $\mathtt{c}(2\varrho+1;t)=\tbinom{t-1}{2\varrho}$ subsets $A\subseteq E_t$, with their partitions~{\rm(\ref{eq:16})}{\rm(\ref{eq:17})}, such that $|\{1,t\}\cap A|=0$.
\end{itemize}
\end{remark}

Recall that for an odd integer $\ell\in E_t$, there are $2\tbinom{t}{\ell}$ vertices $T$ of the hypercube graph $\boldsymbol{H}(t,2)$ such that
$|\boldsymbol{Q}(T,\boldsymbol{R})|=\ell$; see~\cite[Th.~13.6]{AM-PROM-I}.

\begin{lemma}
Let $\ell\in [3,t]$ be an odd integer. Consider the symmetric cycle~$\boldsymbol{R}$ in the hypercube graph~$\boldsymbol{H}(t,2)$, defined by~{\rm(\ref{eq:9})(\ref{eq:10})}, and the subset of vertices
\begin{equation}
\label{eq:20}
\{T\in\{1,-1\}^t\colon |\boldsymbol{Q}(T,\boldsymbol{R})|=\ell\}\; .
\end{equation}
\begin{itemize}
\item[\rm(i)] {\rm(a)} In the set~{\rm(\ref{eq:20})} there are $\tbinom{t-1}{\ell}$ topes $T$ whose negative parts $T^-$ are disjoint unions
\begin{multline}
\label{eq:21}
T^-=[i_1,j_1]\;\dot\cup\;[i_2,j_2]\;\dot\cup\;\cdots\;\dot\cup\;[i_{(\ell+1)/2},j_{(\ell+1)/2}]\; ;\\
j_1+2\leq i_2,\ \ j_2+2\leq i_3,\ \ \ldots,\ \ j_{(\ell-1)/2}+2\leq i_{(\ell+1)/2}\; ,
\end{multline}
of $\tfrac{\ell+1}{2}$ intervals of $E_t$, and
\begin{equation*}
\{1,t\}\cap T^-=\{i_1\}=\{1\}\; .
\end{equation*}

More precisely, if
\begin{equation*}
\tfrac{\ell+1}{2}\leq j\leq t-\tfrac{\ell+1}{2}\; ,
\end{equation*}
then in the set~{\rm(\ref{eq:20})} there are
\begin{equation*}
\mathtt{c}(\tfrac{\ell+1}{2};j)\cdot\mathtt{c}(\tfrac{\ell+1}{2};t-j)=
\tbinom{j-1}{(\ell-1)/2}\tbinom{t-j-1}{(\ell-1)/2}
\end{equation*}
topes\hfill $T$\hfill whose\hfill negative\hfill parts\hfill $T^-$,\hfill of\hfill cardinality\hfill $j$,\hfill are\hfill disjoint\newline unions~{\rm(\ref{eq:21})} of $\tfrac{\ell+1}{2}$ intervals of $E_t$, such that \mbox{$\{1,t\}\cap T^-=\{i_1\}=\{1\}$.}

\noindent{\em(b)} In the set~{\rm(\ref{eq:20})} there are $\tbinom{t-1}{\ell}$ topes $T$ whose negative parts~$T^-$ are disjoint unions~{\rm(\ref{eq:21})}
of $\tfrac{\ell+1}{2}$ intervals of $E_t$, and
\begin{equation*}
\{1,t\}\cap T^-=\{j_{(\ell+1)/2}\}=\{t\}\; .
\end{equation*}
More precisely, if $\tfrac{\ell+1}{2}\leq j\leq t-\tfrac{\ell+1}{2}$, then in the set~{\rm(\ref{eq:20})} there are~$\tbinom{j-1}{(\ell-1)/2}\tbinom{t-j-1}{(\ell-1)/2}$ topes $T$ whose negative parts~$T^-$, with $|T^-|=j$, are disjoint unions~{\rm(\ref{eq:21})} of $\tfrac{\ell+1}{2}$ intervals of $E_t$, such that~$\{1,t\}\cap T^-$ $=\{j_{(\ell+1)/2}\}=\{t\}$.

\item[\rm(ii)]
In the set~{\rm(\ref{eq:20})} there are $\tbinom{t-1}{\ell-1}$ topes $T$ whose negative parts $T^-$ are disjoint unions~{\rm(\ref{eq:21})}
of $\tfrac{\ell+1}{2}$ intervals of $E_t$, and
\begin{equation*}
\{1,t\}\cap T^-=\{i_1,j_{(\ell+1)/2}\}=\{1,t\}\; .
\end{equation*}

More precisely, if
\begin{equation*}
\tfrac{\ell+1}{2}\leq j\leq t-\tfrac{\ell-1}{2}\; ,
\end{equation*}
then in the set~{\rm(\ref{eq:20})} there are
\begin{equation*}
\mathtt{c}(\tfrac{\ell+1}{2};j)\cdot\mathtt{c}(\tfrac{\ell-1}{2};t-j)=
\tbinom{j-1}{(\ell-1)/2}\tbinom{t-j-1}{(\ell-3)/2}
\end{equation*}
topes $T$ whose negative parts $T^-$, of cardinality $j$, are disjoint unions~{\rm(\ref{eq:21})}
of $\tfrac{\ell+1}{2}$ intervals of $E_t$, such that $\{1,t\}\cap T^-=\{i_1,j_{(\ell+1)/2}\}=\{1,t\}$.

\item[\rm(iii)] In the set~{\rm(\ref{eq:20})} there are $\tbinom{t-1}{\ell-1}$ topes $T$ whose negative parts $T^-$ are disjoint unions
\begin{multline}
\label{eq:41}
T^-=[i_1,j_1]\;\dot\cup\;[i_2,j_2]\;\dot\cup\;\cdots\;\dot\cup\;[i_{(\ell-1)/2},j_{(\ell-1)/2}]\; ;\\
j_1+2\leq i_2,\ \ j_2+2\leq i_3,\ \ \ldots,\ \ j_{(\ell-3)/2}+2\leq i_{(\ell-1)/2}\; ,
\end{multline}
of $\tfrac{\ell-1}{2}$ intervals of $E_t$, and
\begin{equation*}
|\{1,t\}\cap T^-|=0\; .
\end{equation*}

More precisely, if
\begin{equation*}
\tfrac{\ell-1}{2}\leq j\leq t-\tfrac{\ell+1}{2}\; ,
\end{equation*}
then in the set~{\rm(\ref{eq:20})} there are
\begin{equation*}\mathtt{c}(\tfrac{\ell-1}{2};j)\cdot\mathtt{c}(\tfrac{\ell+1}{2};t-j)=
\tbinom{j-1}{(\ell-3)/2}\tbinom{t-j-1}{(\ell-1)/2}
\end{equation*}
topes $T$ whose negative parts $T^-$, of cardinality~$j$, are disjoint unions~{\rm(\ref{eq:41})} of $\tfrac{\ell-1}{2}$ intervals of $E_t$, such that $|\{1,t\}\cap T^-|=0$.
\end{itemize}
\end{lemma}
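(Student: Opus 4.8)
The plan is to reduce everything to counting subsets $A=T^-$ of $E_t$ whose partition~(\ref{eq:16})(\ref{eq:17}) into maximal intervals has a prescribed number $\varrho$ of blocks and a prescribed intersection with $\{1,t\}$, and then to evaluate these counts via the composition formula $\mathtt{c}(m;n)=\binom{n-1}{m-1}$. The bridge is Proposition~\ref{prop:3}: reading off $|\boldsymbol{Q}({}_{-A}\mathrm{T}^{(+)},\boldsymbol{R})|$ in each of its four cases and setting it equal to the odd value $\ell$ pins down $\varrho$ exactly. Concretely, in the three cases where $\{1,t\}\cap A$ is nonempty one has $|\boldsymbol{Q}|=2\varrho-1$, so $|\boldsymbol{Q}|=\ell$ forces $\varrho=\tfrac{\ell+1}{2}$ (this produces the block counts $\tfrac{\ell+1}{2}$ in parts (i) and (ii)); in the remaining case $|\{1,t\}\cap A|=0$ one has $|\boldsymbol{Q}|=2\varrho+1$, so $|\boldsymbol{Q}|=\ell$ forces $\varrho=\tfrac{\ell-1}{2}$ (the block count in part (iii)). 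Since $\ell$ is odd and $\ell\geq 3$, these values of $\varrho$ are positive integers, so every case is genuinely populated.

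Next I would encode a subset $A\subseteq E_t$ with exactly $\varrho$ maximal intervals by its block lengths $a_k:=j_k-i_k+1\geq 1$ (for $1\leq k\leq\varrho$) together with the surrounding gap lengths $g_0:=i_1-1$, $g_k:=i_{k+1}-j_k-1$ (for $1\leq k\leq\varrho-1$), and $g_\varrho:=t-j_\varrho$. The separation condition~(\ref{eq:17}) is exactly the requirement that the inner gaps satisfy $g_k\geq 1$ for $1\leq k\leq\varrho-1$, while $g_0,g_\varrho\geq 0$; moreover $\sum_k a_k+\sum_k g_k=t$ and $|A|=\sum_k a_k$. The four cases of the lemma then correspond precisely to the boundary constraints on the outer gaps: $\{1,t\}\cap A=\{1\}$ is $g_0=0,\ g_\varrho\geq 1$; $\{1,t\}\cap A=\{t\}$ is $g_0\geq 1,\ g_\varrho=0$; $\{1,t\}\cap A=\{1,t\}$ is $g_0=g_\varrho=0$; and $|\{1,t\}\cap A|=0$ is $g_0,g_\varrho\geq 1$.

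With this dictionary the refined counts are immediate composition enumerations. Fixing $|A|=j$, the block lengths range over $\mathtt{c}(\varrho;j)=\binom{j-1}{\varrho-1}$ compositions of $j$. Independently, the gaps forced to be positive---the $\varrho-1$ inner gaps, together with $g_0$ when $1\notin A$ and $g_\varrho$ when $t\notin A$---range over $\mathtt{c}(\nu;t-j)=\binom{t-j-1}{\nu-1}$ compositions of $t-j$, where $\nu\in\{\varrho-1,\varrho,\varrho+1\}$ is their number. Multiplying these two independent choices yields the stated products, e.g.\ $\binom{j-1}{(\ell-1)/2}\binom{t-j-1}{(\ell-1)/2}$ in case (i) and the analogues in cases (ii) and (iii), while the admissible ranges of $j$ are exactly the conditions $j\geq\varrho$ and $t-j\geq\nu$ that both compositions be nonempty. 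To recover the coarse totals $\binom{t-1}{\ell}$ and $\binom{t-1}{\ell-1}$ I would sum over $j$ using the Vandermonde convolution $\sum_u\binom{u}{a}\binom{(t-2)-u}{b}=\binom{t-1}{a+b+1}$ (with the substitution $u=j-1$); alternatively one may drop the constraint $|A|=j$ from the outset and count compositions of $t$ into $2\varrho-1$, $2\varrho$, or $2\varrho+1$ positive parts directly, which reproduces the totals already recorded in the Remark preceding the lemma.

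The content is entirely combinatorial bookkeeping rather than any substantial difficulty; the only place demanding care is the index juggling. I expect the main (mild) obstacle to be verifying that the parity translation $\varrho=\tfrac{\ell\pm 1}{2}$ is applied with the correct sign in each of the four cases, that the separation inequalities~(\ref{eq:17}) govern only the \emph{inner} gaps while the two \emph{outer} gaps $g_0,g_\varrho$ are controlled by membership of $1$ and $t$ in $A$, and that the formula $\binom{n-1}{m-1}$ is invoked with $n=j$ for the blocks and $n=t-j$ for the positive gaps. These are precisely the off-by-one junctions where a sign or an endpoint could slip.
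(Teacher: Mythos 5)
Your argument is correct and coincides with the route the paper itself intends: the lemma is stated without a separate proof precisely because it follows from Proposition~\ref{prop:3} (which pins down $\varrho=\tfrac{\ell+1}{2}$ or $\tfrac{\ell-1}{2}$ according to $\{1,t\}\cap T^-$) combined with the composition counts recorded in the preceding remark, exactly as you do. Your block/gap encoding, the identification of which gaps must be positive, the refined counts $\mathtt{c}(\cdot\,;j)\cdot\mathtt{c}(\cdot\,;t-j)$, and the ranges of $j$ all check out.
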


We can now give a refined statistic on the decompositions of vertices with respect to the distinguished symmetric cycle.
\begin{theorem}
Let $j\in E_t$, and let $\ell\in[3,t]$ be an odd integer. Consider the symmetric cycle~$\boldsymbol{R}$ in the hypercube graph~$\boldsymbol{H}(t,2)$, defined by~{\rm(\ref{eq:9})(\ref{eq:10})}.
\begin{itemize}
\item[\rm(i)]
If
\begin{equation*}
j<\tfrac{\ell-1}{2}\ \ \ \text{or}\ \ \ j>t-\tfrac{\ell-1}{2}\; ,
\end{equation*}
then
\begin{equation*}
|\{T\in\{1,-1\}^t\colon |T^-|=j,\ |\boldsymbol{Q}(T,\boldsymbol{R})|=\ell\}|=0\; .
\end{equation*}
\item[\rm(ii)]
If
\begin{equation*}
\tfrac{\ell-1}{2}\leq j\leq t-\tfrac{\ell-1}{2}\; ,
\end{equation*}
then
\begin{gather*}
|\{T\in\{1,-1\}^t\colon |T^-|=j,\ |\boldsymbol{Q}(T,\boldsymbol{R})|=\ell\}|\\=
|\{T\in\{1,-1\}^t\colon |T^-|=t-j,\ |\boldsymbol{Q}(T,\boldsymbol{R})|=\ell\}|\\=
2\mathtt{c}(\tfrac{\ell+1}{2};j)\cdot\mathtt{c}(\tfrac{\ell+1}{2};t-j)+\mathtt{c}(\tfrac{\ell+1}{2};j)\cdot\mathtt{c}(\tfrac{\ell-1}{2};t-j)
+\mathtt{c}(\tfrac{\ell-1}{2};j)\cdot\mathtt{c}(\tfrac{\ell+1}{2};t-j)\\=
\binom{j-1}{\frac{\ell-1}{2}}\binom{t-j}{\frac{\ell-1}{2}}+
\binom{t-j-1}{\frac{\ell-1}{2}}\binom{j}{\frac{\ell-1}{2}}\\=
\mathtt{c}(\tfrac{\ell+1}{2};j)\cdot\mathtt{c}(\tfrac{\ell+1}{2};t-j+1)
+\mathtt{c}(\tfrac{\ell+1}{2};t-j)\cdot\mathtt{c}(\tfrac{\ell+1}{2};j+1)\; .
\end{gather*}
In particular, we have
\begin{equation*}
|\{T\in\{1,-1\}^t\colon |T^-|=\tfrac{\ell-1}{2},\ |\boldsymbol{Q}(T,\boldsymbol{R})|=\ell\}|
=\binom{t-\frac{\ell+1}{2}}{\frac{\ell-1}{2}}=\binom{t-\frac{\ell+1}{2}}{t-\ell}\; ,
\end{equation*}
and for $j\in[t-1]$, we have
\begin{equation*}
|\{T\in\{1,-1\}^t\colon |T^-|=j,\ |\boldsymbol{Q}(T,\boldsymbol{R})|=3\}|=2j(t-j)-t\; .
\end{equation*}
\end{itemize}
\end{theorem}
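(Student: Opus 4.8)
The plan is to deduce the Theorem from the preceding Lemma by splitting the set $\{T\in\{1,-1\}^t\colon |T^-|=j,\ |\boldsymbol{Q}(T,\boldsymbol{R})|=\ell\}$ according to the value of $\{1,t\}\cap T^-$ into the four classes enumerated there, and then adding the resulting per-$j$ counts. Throughout I would abbreviate $m:=\tfrac{\ell-1}{2}$, so that $\tfrac{\ell+1}{2}=m+1$, and use $\mathtt{c}(r;n)=\binom{n-1}{r-1}$ together with the convention that $\binom{a}{b}=0$ whenever $0\le a<b$; this convention is what allows a case-count to be used on the full range of $j$, since it evaluates to zero precisely where that case contributes no topes.

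For part~(i) I would first read off from the Lemma that $T^-$ is a disjoint union of $m$ intervals when $|\{1,t\}\cap T^-|=0$ and of $m+1$ intervals in the three remaining cases; as every interval is nonempty, this forces $|T^-|\ge m=\tfrac{\ell-1}{2}$, so the count vanishes for $j<\tfrac{\ell-1}{2}$. For the upper bound I would use the involution $T\mapsto -T={}_{-E_t}T$: by Remark~\ref{prop:1} (with $A=T^-$) we have $\boldsymbol{x}(-T)=-\boldsymbol{x}(T)$, whence $|\boldsymbol{Q}(-T,\boldsymbol{R})|=\|\boldsymbol{x}(-T)\|^2=\|\boldsymbol{x}(T)\|^2=|\boldsymbol{Q}(T,\boldsymbol{R})|$, while $(-T)^-=E_t-T^-$ gives $|(-T)^-|=t-j$. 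This bijection equates the $|T^-|=j$ count with the $|T^-|=t-j$ count, so the vanishing for $t-j<\tfrac{\ell-1}{2}$, i.e.\ $j>t-\tfrac{\ell-1}{2}$, follows from the lower case; the same bijection yields the symmetry asserted in part~(ii).

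For part~(ii), on the range $\tfrac{\ell-1}{2}\le j\le t-\tfrac{\ell-1}{2}$ I would add the four per-$j$ cardinalities from the Lemma: the two equal terms $\mathtt{c}(m+1;j)\,\mathtt{c}(m+1;t-j)$ coming from $\{1,t\}\cap T^-=\{1\}$ and from $\{t\}$, the term $\mathtt{c}(m+1;j)\,\mathtt{c}(m;t-j)$ from $\{1,t\}\cap T^-=\{1,t\}$, and the term $\mathtt{c}(m;j)\,\mathtt{c}(m+1;t-j)$ from $|\{1,t\}\cap T^-|=0$. This is exactly the composition form displayed in the statement. Passing to binomials, the sum is $2\binom{j-1}{m}\binom{t-j-1}{m}+\binom{j-1}{m}\binom{t-j-1}{m-1}+\binom{j-1}{m-1}\binom{t-j-1}{m}$; splitting the doubled term into two copies, pairing one copy with the second summand (common factor $\binom{j-1}{m}$) and the other with the third (common factor $\binom{t-j-1}{m}$), and applying Pascal's rule $\binom{n}{k}+\binom{n}{k-1}=\binom{n+1}{k}$ to each pair, collapses the expression to $\binom{j-1}{m}\binom{t-j}{m}+\binom{t-j-1}{m}\binom{j}{m}$, the asserted closed form. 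The last composition form in the statement is this same expression rewritten through $\binom{t-j}{m}=\mathtt{c}(m+1;t-j+1)$ and $\binom{j}{m}=\mathtt{c}(m+1;j+1)$. The two specializations are then immediate: taking $j=m$ annihilates the first summand (as $\binom{m-1}{m}=0$) and leaves $\binom{t-m-1}{m}=\binom{t-\frac{\ell+1}{2}}{\frac{\ell-1}{2}}=\binom{t-\frac{\ell+1}{2}}{t-\ell}$, while setting $\ell=3$, i.e.\ $m=1$, gives $(j-1)(t-j)+(t-j-1)j=2j(t-j)-t$.

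Since the algebra is routine, the only delicate point is the boundary bookkeeping: each of the four case-counts is established in the Lemma only on its own subinterval of $j$, so I must verify that extending each to the whole range $[\tfrac{\ell-1}{2},\,t-\tfrac{\ell-1}{2}]$ introduces no spurious contributions. This is guaranteed by the convention $\binom{a}{b}=0$ for $a<b$, since, for instance, the weight $\binom{j-1}{m}\binom{t-j-1}{m}$ of the $\{1\}$-case already vanishes for $j<m+1$ and for $j>t-m-1$; once this is checked, the single summed formula holds uniformly across the range and the Theorem follows.
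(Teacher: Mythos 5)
Your proposal is correct and follows exactly the route the paper intends: the paper gives no separate proof of the Theorem, treating it as the sum over the four cases of the preceding Lemma (classified by $\{1,t\}\cap T^-$), with the binomial simplification via Pascal's rule and the $-T$ symmetry exactly as you describe. Your explicit attention to the boundary conventions $\binom{a}{b}=0$ for $a<b$ is a welcome addition that the paper leaves implicit.
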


\section{The sizes of decompositions}

For\hfill the\hfill symmetric\hfill cycle\hfill $\boldsymbol{R}$\hfill in\hfill the\hfill hypercube\hfill graph\hfill $\boldsymbol{H}(t,2)$,\hfill defined\newline by~(\ref{eq:9})(\ref{eq:10}), the $(i,j)$th entry of the symmetric Toeplitz matrix~$\mathbf{M}\cdot\mathbf{M}^{\top}$, where $\mathbf{M}:=\mathbf{M}(\boldsymbol{R})$ is given in~(\ref{eq:26}), is
\begin{equation*}
t-2|j-i|\; ,
\end{equation*}
while\hfill the\hfill $i$th\hfill row\hfill $(\mathbf{M}^{-1}\cdot(\mathbf{M}^{-1})^{\top})_i$\hfill
of\hfill the\hfill symmetric\hfill Toeplitz\hfill matrix\newline $\mathbf{M}^{-1}\cdot(\mathbf{M}^{-1})^{\top}$ is
\begin{equation}
\label{eq:25}
\bigl(\mathbf{M}^{-1}\cdot(\mathbf{M}^{-1})^{\top}\bigr)_i=
\begin{cases}
\frac{1}{4}\cdot(\,2\boldsymbol{\sigma}(1)-\boldsymbol{\sigma}(2)+\boldsymbol{\sigma}(t)\,)\; , & \text{if $i=1$}\; ,\\
\frac{1}{4}\cdot(\,-\boldsymbol{\sigma}(i-1)+2\boldsymbol{\sigma}(i)+\boldsymbol{\sigma}(i+1)\,)\; , & \text{if $2\leq i\leq t-1$}\; ,\\
\frac{1}{4}\cdot(\,\boldsymbol{\sigma}(1)-\boldsymbol{\sigma}(t-1)+2\boldsymbol{\sigma}(t)\,)\; , & \text{if $i=t$}\; .
\end{cases}
\end{equation}
Recall that for a tope $T\in\{1,-1\}^t$ we have
\begin{equation*}
|\boldsymbol{Q}(T,\boldsymbol{R})|=\|\boldsymbol{x}(T)\|^2=
T\cdot\mathbf{M}^{-1}\cdot(\mathbf{M}^{-1})^{\top}\cdot T^{\top}\; .
\end{equation*}
If $T',T''\in\{1,-1\}^t$, then~(\ref{eq:22}) implies that
\begin{multline*}
|\boldsymbol{Q}(T'',\boldsymbol{R})|=|\boldsymbol{Q}(T',\boldsymbol{R})|-
4\Bigl\langle T'\mathbf{M}^{-1},\Bigl(\sum_{s\in\mathbf{S}(T',T'')}T'(s)\boldsymbol{\sigma}(s)\Bigr)\cdot\mathbf{M}^{-1}\Bigr\rangle
\\+4\Bigl\|\Bigl(\sum_{s\in\mathbf{S}(T',T'')}T'(s)\boldsymbol{\sigma}(s)\Bigr)\cdot\mathbf{M}^{-1}\Bigr\|^2\; ,
\end{multline*}
and, as a consequence, we have
\begin{multline}
\label{eq:32}
|\boldsymbol{Q}(T',\boldsymbol{R})|-|\boldsymbol{Q}(T'',\boldsymbol{R})|
\\=
4\Bigl\langle \Bigl(T'-\Bigl(\sum_{s\in\mathbf{S}(T',T'')}T'(s)\boldsymbol{\sigma}(s)\Bigr)\Bigr)\cdot\mathbf{M}^{-1},
\Bigl(\sum_{s\in\mathbf{S}(T',T'')}T'(s)\boldsymbol{\sigma}(s)\Bigr)\cdot\mathbf{M}^{-1}\Bigr\rangle
\; .
\end{multline}

\section{Equinumerous decompositions}

If $T',T''\in\{1,-1\}^t$ are two vertices of the hypercube graph~$\boldsymbol{H}(t,2)$ with its distinguished symmetric cycle~$\boldsymbol{R}$, defined by~(\ref{eq:9})(\ref{eq:10}), then it follows from~(\ref{eq:32}) that
\begin{equation}
\label{eq:24}
|\boldsymbol{Q}(T',\boldsymbol{R})|=|\boldsymbol{Q}(T'',\boldsymbol{R})|
\end{equation}
if and only if
\begin{equation*}
\Bigl\langle \Bigl(T'-\Bigl(\sum_{s\in\mathbf{S}(T',T'')}T'(s)\boldsymbol{\sigma}(s)\Bigr)\Bigr)\cdot\mathbf{M}^{-1},
\Bigl(\sum_{s\in\mathbf{S}(T',T'')}T'(s)\boldsymbol{\sigma}(s)\Bigr)\cdot\mathbf{M}^{-1}\Bigr\rangle=0\; .
\end{equation*}
Let us denote by $\omega(i,j)$ the $(i,j)$th entry of the matrix~$\mathbf{M}^{-1}\cdot(\mathbf{M}^{-1})^{\top}$ whose rows are given in~(\ref{eq:25}). We see that~(\ref{eq:24}) holds if and only if
\begin{equation*}
\begin{split}
\sum_{i\in E_t-\mathbf{S}(T',T'')}\;\sum_{j\in\mathbf{S}(T',T'')}T'(i)\cdot T'(j)\cdot\omega(i,j)&=\\
\sum_{i\in\mathbf{S}(T',T'')}\;\sum_{j\in E_t-\mathbf{S}(T',T'')} T'(i)\cdot T'(j)\cdot\omega(i,j)&=0
\end{split}
\end{equation*}
or, equivalently,
\begin{equation}
\label{eq:31}
\sum_{i\in[t-1]}\; \sum_{\substack{j\in[i+1,t]:\\ |\{i,j\}\cap\mathbf{S}(T',T'')|=1}}
T'(i)\cdot T'(j)\cdot\omega(i,j)=0\; .
\end{equation}

\begin{proposition}
Let $\boldsymbol{R}$ be the symmetric cycle in the hypercube graph~$\boldsymbol{H}(t,2)$, defined by~{\rm(\ref{eq:9})(\ref{eq:10})}.

Let $T\in\{1,-1\}^t$ be a vertex of $\boldsymbol{H}(t,2)$, and let $A$ be a proper subset of the set~$E_t$.

\begin{itemize}
\item[\rm(i)] If $|\{1,t\}\cap A|=1$, then
\begin{equation*}
|\boldsymbol{Q}(T,\boldsymbol{R})|=|\boldsymbol{Q}({}_{-A}T,\boldsymbol{R})|\ \Longleftrightarrow\
\sum_{\substack{i\in[t-1]:\\ |\{i,i+1\}\cap A|=1}}
T(i)\cdot T(i+1)=T(1)\cdot T(t)\; .
\end{equation*}

\item[\rm(ii)] If $|\{1,t\}\cap A|\neq 1$, then
\begin{equation*}
|\boldsymbol{Q}(T,\boldsymbol{R})|=|\boldsymbol{Q}({}_{-A}T,\boldsymbol{R})|\ \Longleftrightarrow\
\sum_{\substack{i\in[t-1]:\\ |\{i,i+1\}\cap A|=1}}
T(i)\cdot T(i+1)=0\; .
\end{equation*}
\end{itemize}
\end{proposition}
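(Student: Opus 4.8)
The plan is to begin from the characterization of equinumerous decompositions already derived in the excerpt, namely that $|\boldsymbol{Q}(T,\boldsymbol{R})|=|\boldsymbol{Q}({}_{-A}T,\boldsymbol{R})|$ holds if and only if the weighted sum~(\ref{eq:31}) vanishes. Here I would specialize~(\ref{eq:31}) to the case $T'=T$, $T''={}_{-A}T$, for which the separation set is exactly $\mathbf{S}(T,{}_{-A}T)=A$. The key observation is that the only pairs $\{i,j\}$ contributing to~(\ref{eq:31}) are those with $|\{i,j\}\cap A|=1$ and with $\omega(i,j)\neq 0$; reading off the off-diagonal structure of the rows in~(\ref{eq:25}), the nonzero entries $\omega(i,j)$ with $i<j$ are precisely $\omega(i,i+1)=\tfrac14$ for $1\leq i\leq t-1$, together with the single ``wrap-around'' entry $\omega(1,t)=\tfrac14$ coming from the first and last rows.

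\begin{sketch}
First I would substitute $\mathbf{S}=A$ into~(\ref{eq:31}) and discard every term whose coefficient $\omega(i,j)$ is zero, leaving only the consecutive pairs $\{i,i+1\}$ and the pair $\{1,t\}$. After clearing the common factor $\tfrac14$, condition~(\ref{eq:24}) becomes
\begin{equation*}
\sum_{\substack{i\in[t-1]:\\ |\{i,i+1\}\cap A|=1}} T(i)\cdot T(i+1)
\;+\;[\,|\{1,t\}\cap A|=1\,]\cdot T(1)\cdot T(t)=0\; ,
\end{equation*}
where the bracket is an Iverson indicator recording whether the wrap-around pair $\{1,t\}$ is split by $A$. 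Next I would split into the two cases of the proposition according to the parity of $|\{1,t\}\cap A|$. If $|\{1,t\}\cap A|=1$, the wrap-around term is present and equals $T(1)T(t)$; moving it to the right-hand side yields exactly the identity of part~(i). If $|\{1,t\}\cap A|\in\{0,2\}$, then $|\{1,t\}\cap A|=1$ is false, so the wrap-around term is absent and the condition reduces to the vanishing sum of part~(ii). Since $A$ is a proper subset, one checks these two regimes exhaust all relevant cases.
\end{sketch}

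**Main obstacle.** The only genuinely delicate point is the bookkeeping of the boundary pair $\{1,t\}$: in the Toeplitz matrix $\mathbf{M}^{-1}\cdot(\mathbf{M}^{-1})^{\top}$ the entries in positions $(1,t)$ and $(t,1)$ are the unique nonzero off-diagonal entries that are \emph{not} of the consecutive form $(i,i+1)$, and they arise from the special first and last rows in~(\ref{eq:25}). I would verify carefully that $\{1,t\}$ contributes to~(\ref{eq:31}) exactly when $A$ separates $1$ from $t$, i.e.\ when $|\{1,t\}\cap A|=1$, since this is precisely the hinge distinguishing parts~(i) and~(ii). Once the nonzero-coefficient pairs are correctly inventoried, the remainder is a routine rearrangement, so the conceptual weight of the argument rests entirely on reading the sparsity pattern of~(\ref{eq:25}) correctly.
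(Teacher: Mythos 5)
Your overall strategy is exactly the paper's: specialize~(\ref{eq:31}) to $T'=T$, $T''={}_{-A}T$ with $\mathbf{S}(T',T'')=A$, and observe that the only pairs $\{i,j\}$, $i<j$, with $\omega(i,j)\neq 0$ are the consecutive pairs $\{i,i+1\}$ and the wrap-around pair $\{1,t\}$. The gap is a sign error in reading off these entries, and it is not cosmetic: the whole dichotomy between parts~(i) and~(ii) rests on the fact that $\omega(1,t)$ and $\omega(i,i+1)$ have \emph{opposite} signs. Computing directly from $(\mathbf{M}^{-1})_i=\tfrac12(\boldsymbol{\sigma}(i)-\boldsymbol{\sigma}(i+1))$ for $i<t$ and $(\mathbf{M}^{-1})_t=\tfrac12(\boldsymbol{\sigma}(1)+\boldsymbol{\sigma}(t))$, one gets $\omega(i,i+1)=-\tfrac14$ for every $i\in[t-1]$ (including $\omega(t-1,t)=-\tfrac14$, which involves the special last row) while $\omega(1,t)=+\tfrac14$; your claim that $\omega(i,i+1)=+\tfrac14$ is wrong. (The printed middle line of~(\ref{eq:25}) contains a typo, $+\boldsymbol{\sigma}(i+1)$ where $-\boldsymbol{\sigma}(i+1)$ is meant, which may have misled you; but the first row correctly shows $-\boldsymbol{\sigma}(2)$ in position $(1,2)$, and since the matrix is symmetric all consecutive off-diagonal entries must equal $-\tfrac14$.)

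With the correct signs, (\ref{eq:31}) reads $\tfrac14\bigl(\,[\,|\{1,t\}\cap A|=1\,]\cdot T(1)T(t)-\sum_{i}T(i)T(i+1)\,\bigr)=0$, which is precisely the proposition. With your signs, the case $|\{1,t\}\cap A|=1$ yields $\sum_{i}T(i)T(i+1)=-T(1)T(t)$, which contradicts part~(i): the two sides differ by $2\,T(1)T(t)\neq 0$. So the step ``moving it to the right-hand side yields exactly the identity of part~(i)'' does not follow from your displayed equation. Part~(ii) survives only because the wrap-around term is absent there, so the sign of the remaining sum is irrelevant to its vanishing. Once the sign of the consecutive entries is corrected, your argument coincides with the paper's proof.
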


\begin{proof} Consider relation~(\ref{eq:31}) for the topes $T':=T$ and $T'':={}_{-A}T'$ with their separation set~$\mathbf{S}(T',T'')=A$.
\begin{itemize}
\item[\rm(i)] Since $|\{1,t\}\cap \mathbf{S}(T',T'')|=1$, we have
\begin{multline*}
|\boldsymbol{Q}(T',\boldsymbol{R})|=|\boldsymbol{Q}(T'',\boldsymbol{R})|\ \Longleftrightarrow\ T'(1)\cdot T'(t)\cdot 1\\+\sum_{i\in[t-1]}\; \sum_{\substack{j\in[i+1,t]:\\ |\{i,j\}\cap\mathbf{S}(T',T'')|=1}}
T'(i)\cdot T'(j)\cdot(-1)\\=
T'(1)\cdot T'(t)\cdot 1+\sum_{\substack{i\in[t-1]:\\ |\{i,i+1\}\cap\mathbf{S}(T',T'')|=1}}
T'(i)\cdot T'(i+1)\cdot(-1)=0\; .
\end{multline*}

\item[\rm(ii)] Since $|\{1,t\}\cap \mathbf{S}(T',T'')|\neq 1$, we have
\begin{multline*}
|\boldsymbol{Q}(T',\boldsymbol{R})|=|\boldsymbol{Q}(T'',\boldsymbol{R})|\ \Longleftrightarrow\ \sum_{i\in[t-1]}\; \sum_{\substack{j\in[i+1,t]:\\ |\{i,j\}\cap\mathbf{S}(T',T'')|=1}}
T'(i)\cdot T'(j)\cdot(-1)\\=
\sum_{\substack{i\in[t-1]:\\ |\{i,i+1\}\cap\mathbf{S}(T',T'')|=1}}
T'(i)\cdot T'(i+1)\cdot(-1)=0\; . \qedhere
\end{multline*}
\end{itemize}
\end{proof}

We conclude this note with simple structural criteria (derived from~Proposition~\ref{prop:3}) of the equicardinality of decompositions.
\begin{corollary}
Let $\boldsymbol{R}$ be the symmetric cycle in the hypercube graph~$\boldsymbol{H}(t,2)$, defined by~{\rm(\ref{eq:9})(\ref{eq:10})}.

Let $A$ and $B$ be two nonempty subsets of the set $E_t$, and let
\begin{align*}
A&=[i'_1,j'_1]\;\dot\cup\;[i'_2,j'_2]\;\dot\cup\;\cdots\;\dot\cup\;[i'_{\varrho(A)},j'_{\varrho(A)}]
\intertext{and}
B&=[i''_1,j''_1]\;\dot\cup\;[i''_2,j''_2]\;\dot\cup\;\cdots\;\dot\cup\;[i''_{\varrho(B)},j''_{\varrho(B)}]
\end{align*}
be their partitions into intervals such that
\begin{equation*}
j'_1+2\leq i'_2,\ \ j'_2+2\leq i'_3,\ \ \ldots,\ \
j'_{\varrho(A)-1}+2\leq i'_{\varrho(A)}
\end{equation*}
and
\begin{equation*}
j''_1+2\leq i''_2,\ \ j''_2+2\leq i''_3,\ \ \ldots,\ \
j''_{\varrho(B)-1}+2\leq i''_{\varrho(B)}\; .
\end{equation*}

\begin{itemize}
\item[\rm(i)]
If
\begin{equation*}
|\{1,t\}\cap A|>0\ \ \ \text{and}\ \ \ |\{1,t\}\cap B|>0\; ,
\end{equation*}
or
\begin{equation*}
|\{1,t\}\cap A|=|\{1,t\}\cap B|=0\; ,
\end{equation*}
then
\begin{equation*}
\begin{split}
|\boldsymbol{Q}({}_{-A}\mathrm{T}^{(+)},\boldsymbol{R})|
=|\boldsymbol{Q}({}_{-B}\mathrm{T}^{(+)},\boldsymbol{R})|\ \ \ \Longleftrightarrow\ \ \ \varrho(B)=
\varrho(A)\; .
\end{split}
\end{equation*}

\item[\rm(ii)]
If
\begin{equation*}
|\{1,t\}\cap A|>0\ \ \ \text{and}\ \ \ |\{1,t\}\cap B|=0\; ,
\end{equation*}
then
\begin{equation*}
|\boldsymbol{Q}({}_{-A}\mathrm{T}^{(+)},\boldsymbol{R})|
=|\boldsymbol{Q}({}_{-B}\mathrm{T}^{(+)},\boldsymbol{R})|\ \ \ \Longleftrightarrow\ \ \ \varrho(B)=
\varrho(A)-1\; .
\end{equation*}
\end{itemize}
\end{corollary}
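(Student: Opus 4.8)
The plan is to read off the relevant cardinalities directly from Proposition~\ref{prop:3} and then compare them. The crucial observation is that, for a nonempty subset $A\subseteq E_t$ with partition~(\ref{eq:16})(\ref{eq:17}) into $\varrho(A)$ intervals, the quantity $|\boldsymbol{Q}({}_{-A}\mathrm{T}^{(+)},\boldsymbol{R})|$ depends only on $\varrho(A)$ together with whether or not $\{1,t\}$ meets $A$. Indeed, parts~(i), (ii), and~(iv) of Proposition~\ref{prop:3} each yield $|\boldsymbol{Q}({}_{-A}\mathrm{T}^{(+)},\boldsymbol{R})|=2\varrho(A)-1$ whenever $|\{1,t\}\cap A|>0$, while part~(iii) yields $|\boldsymbol{Q}({}_{-A}\mathrm{T}^{(+)},\boldsymbol{R})|=2\varrho(A)+1$ whenever $|\{1,t\}\cap A|=0$. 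I would state this consolidated two-case formula first, since it reduces the four parity cases of Proposition~\ref{prop:3} to the only distinction that matters here.

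For part~(i) I would treat the two listed configurations separately. If $|\{1,t\}\cap A|>0$ and $|\{1,t\}\cap B|>0$, the consolidated formula gives $|\boldsymbol{Q}({}_{-A}\mathrm{T}^{(+)},\boldsymbol{R})|=2\varrho(A)-1$ and $|\boldsymbol{Q}({}_{-B}\mathrm{T}^{(+)},\boldsymbol{R})|=2\varrho(B)-1$, so the two sizes coincide exactly when $\varrho(A)=\varrho(B)$. If instead $|\{1,t\}\cap A|=|\{1,t\}\cap B|=0$, the two sizes are $2\varrho(A)+1$ and $2\varrho(B)+1$, which again agree precisely when $\varrho(A)=\varrho(B)$. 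In both configurations the additive constant is common to $A$ and $B$, hence cancels, leaving the stated equivalence.

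For part~(ii), with $|\{1,t\}\cap A|>0$ and $|\{1,t\}\cap B|=0$, the consolidated formula gives $|\boldsymbol{Q}({}_{-A}\mathrm{T}^{(+)},\boldsymbol{R})|=2\varrho(A)-1$ and $|\boldsymbol{Q}({}_{-B}\mathrm{T}^{(+)},\boldsymbol{R})|=2\varrho(B)+1$. Equating them gives $2\varrho(A)-1=2\varrho(B)+1$, that is, $\varrho(B)=\varrho(A)-1$, which is the asserted criterion; here the two additive constants differ by $2$, and this is exactly what produces the shift between $\varrho(A)$ and $\varrho(B)$.

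Since every step is a direct substitution from Proposition~\ref{prop:3}, there is no genuine obstacle to overcome; the only point requiring care is the bookkeeping of which parity case applies, and this is precisely what the consolidated formula handles once and for all. I would therefore keep the write-up short, isolating the consolidation step as the single substantive ingredient and then disposing of parts~(i) and~(ii) as one-line comparisons of the resulting linear expressions in $\varrho(A)$ and $\varrho(B)$.
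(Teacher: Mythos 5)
Your proposal is correct and follows exactly the route the paper intends: the corollary is stated as a direct consequence of Proposition~\ref{prop:3}, whose four cases collapse to $|\boldsymbol{Q}({}_{-A}\mathrm{T}^{(+)},\boldsymbol{R})|=2\varrho(A)-1$ when $|\{1,t\}\cap A|>0$ and $2\varrho(A)+1$ when $|\{1,t\}\cap A|=0$, after which both parts are immediate comparisons. No gaps; your consolidation step is precisely the observation the paper relies on.
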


\vspace{5mm}

\end{document}